\newtheorem{theor}{Theorem} 
\theoremstyle{definition} \newtheorem{defin}{Definition}
\newtheorem{ex}{Example}
\theoremstyle{remark} \newtheorem{rem}{Remark}
\begin{document}

\title{Enumeration of one class of plane weighted trees}
\author{Yury. Kochetkov}
\date{}
\email{yuyukochetkov@gmail.com}
\begin{abstract} By weighted tree we understand such connected
tree,that: a) each its vertex and each edge have a positive
integer weight; b) the weight of each vertex is equal to the sum
of weights of outgoing edges. Each tree has a binary structure ---
we can color its vertices in two colors, black and white so, that
adjacent vertices have different colors. A type is a set of
pairwise non-isotopic plane weighted trees with a given list of
weights of white vertices and a given list of weights of black
vertices. In this work we present a method for computing the
cardinality of a given type.
\end{abstract}
\maketitle

\section{Introduction}

We will consider plane connected trees with an additional
structure: edges and vertices have positive integer weights,
moreover, for each vertex the sum of weights of outgoing edges is
equal to the weight of this vertex. Such tree will be called a
\emph{w-tree}.

Each plane connected tree has a binary structure: its vertices can
be colored in black and white so, that adjacent vertices have
different colors. We will study the following problem: enumerate
pairwise non-isotopic w-trees with a given list of weights of
black vertices and a given list of weights of white vertices. The
set of such w-trees will be called a type. A type will be denoted
$\langle k_1,\ldots,k_s\,|\,l_1,\ldots,l_t\rangle$, where
$k_1,\ldots,k_s$ are weights of white vertices, written in non
decreasing order, and $l_1,\ldots,l_t$ are weights of black
vertices (also in non decreasing order). Obviously,
$k_1+\ldots+k_s=l_1+\ldots+l_t$. The number of vertices of the
type $\Xi$ will be denoted $v(\Xi)$. The cardinality of a type
$\Xi$ will be denoted $|\Xi|$.
\begin{ex} There are 6 w-trees in the type $\langle 5,12\,|\,
1,7,9\rangle$:
\[\begin{picture}(330,70) \put(0,35){\circle*{4}} \put(30,35){\circle{4}}
\put(30,5){\circle*{4}} \put(60,35){\circle*{4}} \put(90,35){\circle{4}}
\put(0,35){\line(1,0){28}} \put(30,5){\line(0,1){28}}
\put(32,35){\line(1,0){56}} \put(-2,39){\small 1} \put(26,39){\small 12}
\put(58,39){\small 9} \put(89,39){\small 5} \put(34,3){\small 7}
\put(14,28){\scriptsize 1} \put(44,28){\scriptsize 4} \put(74,28){\scriptsize
5} \put(24,16){\scriptsize 7}

\put(120,35){\circle*{4}} \put(150,35){\circle{4}} \put(150,65){\circle*{4}}
\put(180,35){\circle*{4}} \put(210,35){\circle{4}} \put(120,35){\line(1,0){28}}
\put(150,65){\line(0,-1){28}} \put(152,35){\line(1,0){56}} \put(118,25){\small
1} \put(146,25){\small 12} \put(178,25){\small 9} \put(209,25){\small 5}
\put(154,63){\small 7} \put(134,37){\scriptsize 1} \put(164,37){\scriptsize 4}
\put(194,37){\scriptsize 5} \put(144,48){\scriptsize 7}

\put(240,35){\circle*{4}} \put(270,35){\circle{4}} \put(270,5){\circle*{4}}
\put(300,35){\circle*{4}} \put(330,35){\circle{4}} \put(240,35){\line(1,0){28}}
\put(270,5){\line(0,1){28}} \put(272,35){\line(1,0){56}} \put(238,39){\small 1}
\put(266,39){\small 12} \put(298,39){\small 7} \put(329,39){\small 5}
\put(274,3){\small 9} \put(254,28){\scriptsize 1} \put(284,28){\scriptsize 2}
\put(314,28){\scriptsize 5} \put(264,16){\scriptsize 9} \end{picture}\]

\[\begin{picture}(350,50) \put(0,15){\circle*{4}} \put(30,15){\circle{4}}
\put(30,45){\circle*{4}} \put(60,15){\circle*{4}} \put(90,15){\circle{4}}
\put(0,15){\line(1,0){28}} \put(30,45){\line(0,-1){28}}
\put(32,15){\line(1,0){56}} \put(-2,5){\small 1} \put(26,5){\small 12}
\put(58,5){\small 7} \put(89,5){\small 5} \put(35,42){\small 9}
\put(14,18){\scriptsize 1} \put(44,18){\scriptsize 2} \put(74,18){\scriptsize
5} \put(24,27){\scriptsize 9}

\put(120,15){\circle*{4}} \put(145,15){\circle{4}} \put(170,15){\circle*{4}}
\put(195,15){\circle{4}} \put(220,15){\circle*{4}} \put(120,15){\line(1,0){23}}
\put(147,15){\line(1,0){46}} \put(197,15){\line(1,0){23}} \put(118,19){\small
7} \put(141,19){\small 12} \put(168,19){\small 9} \put(193,19){\small 5}
\put(218,19){\small 1} \put(132,8){\scriptsize 7} \put(156,8){\scriptsize 5}
\put(182,8){\scriptsize 4} \put(207,8){\scriptsize 1}

\put(250,15){\circle*{4}} \put(275,15){\circle{4}} \put(300,15){\circle*{4}}
\put(325,15){\circle{4}} \put(350,15){\circle*{4}} \put(250,15){\line(1,0){23}}
\put(277,15){\line(1,0){46}} \put(327,15){\line(1,0){23}} \put(248,19){\small
1} \put(273,19){\small 5} \put(298,19){\small 7} \put(321,19){\small 12}
\put(348,19){\small 9} \put(262,8){\scriptsize 1} \put(288,8){\scriptsize 4}
\put(312,8){\scriptsize 3} \put(338,8){\scriptsize 9} \end{picture}\] \end{ex}

\section{Plane graphs, Belyi functions and anti-Vandermonde systems}

To a w-tree we can correspond a tree-like plane graph with multiple edges :
\[\begin{picture}(300,50) \put(0,15){\circle*{4}} \put(30,15){\circle{4}}
\put(60,15){\circle*{4}} \put(90,15){\circle{4}} \put(60,45){\circle{4}}
\put(0,15){\line(1,0){28}} \put(32,15){\line(1,0){56}}
\put(60,15){\line(0,1){28}} \put(-1,5){\small 2} \put(29,5){\small 4}
\put(59,5){\small 5} \put(89,5){\small 2} \put(64,43){\small 1}
\put(14,18){\scriptsize 2} \put(44,18){\scriptsize 2} \put(74,18){\scriptsize
2} \put(55,30){\scriptsize 1}

\put(120,13){$\Rightarrow$}

\put(160,20){\circle*{4}} \put(190,20){\circle{4}} \put(220,20){\circle*{4}}
\put(250,20){\circle{4}} \put(220,45){\circle{4}} \put(220,20){\line(0,1){23}}
\qbezier(160,20)(175,35)(189,21) \qbezier(160,20)(175,5)(189,19)
\qbezier(191,21)(205,35)(220,20) \qbezier(191,19)(205,5)(220,20)
\qbezier(220,20)(235,35)(249,21) \qbezier(220,20)(235,5)(249,19)
\put(172,17){\scriptsize $O_1$} \put(202,17){\scriptsize $O_2$}
\put(232,17){\scriptsize $O_3$} \end{picture}\] Here the weight of a vertex
becomes its valency and the weight of an edge becomes its multiplicity. In our
example the complement to the graph has 4 components: 3 are bounded (components
$O_1,O_2$ and $O_3$) and one unbounded.

Belyi function of this graph is a rational function $\varphi$ with critical
values $0,1,\infty$ such, that our graph and the graph $G=\varphi^{-1}[0,1]$
are isotopic. In our case $\varphi$ has 3 simple poles in points $c_1\in O_1$,
$c_2\in O_2$ and $c_3\in O_3$. White vertices of $G$ are inverse images of $0$
and black --- of $1$. Without loss of generality we can assume that point $0$
is the white vertex of valency 4 and point $1$ is the black vertex of valency
5. Let white vertex of valency 2 be at point $a_1$, white vertex of valency 1
--- at point $a_2$ and black vertex of valency 2 --- at point $b$. Thus,
$$\varphi(z)=\frac{\alpha z^4(z-a_1)^2(z-a_2)}{(z-c_1)(z-c_2)(z-c_3)},\quad
\varphi(z)-1=\frac{\alpha (z-1)^5(z-b)^2}{(z-c_1)(z-c_2)(z-c_3)}.$$ Hence,
$$\alpha z^4(z-a_1)^2(z-a_2)-(z-c_1)(z-c_2)(z-c_3)=\alpha (z-1)^5(z-b)^2.$$
Equaling coefficients at sixth, fifth and forth powers of $z$ we obtain a
system:
$$\left\{\begin{array}{l}2a_1+a_2=2b+5\\ a_1^2+2a_1a_2=b^2+10b+10\\
a_1^2a_2=5b^2+20b+10\end{array}\right.\eqno(1)$$ As
$$(2a_1+a_2)^2-2(a_1^2+2a_1a_2)=2a_1^2+a_2^2\text{ and } (2b+5)^2-2(b^2+10b+10)=
2b^2+5,$$ then the second equation can be written in the form
$2a_1^2+a_2^2=2b^2+5$. Analogously, as
$$3a_1^2a_2-3(2a_1+a_2)(a_1^2+2a_1a_2)+(2a_1+a_2)^3=2a_1^3+a_2^3$$ and
$$3(5b^2+20b+10)-3(2b+5)(b^2+10b+10)+(2b+5)^3=2b^3+5,$$  then the third equation
can be written in the form $2a_1^3+a_2^3=2b^3+5$. Thus, we obtain so called
\emph{anti-Vandermonde} \cite{Ko} system:
$$\left\{\begin{array}{l}2a_1+a_2-2b-5=0\\ 2a_1^2+a_2^2-2b^2-5=0\\
2a_1^3+a_2^3-2b^3-5=0\end{array}\right.$$ The number of finite solutions of
this system is $\leqslant 6$ (in our case it is 4).

In general case to a w-tree from the given type $\langle
k_1,k_2,\ldots,k_s\,|\, l_1,l_2,\ldots,l_t\rangle$ we correspond a plane
tree-like graph with $s$ white vertices of valences $k_1,k_2,\ldots,k_s$ and
$t$ black vertices of valences $l_1,l_2,\ldots,l_t$. Such graph has
$n=k_1+\ldots+k_s=l_1+\ldots+l_t$ edges and its complement is a union of
$m=n+1-s-t$ bounded domains and one unbounded domain. Belyi function of such
graph is a rational function $\varphi$ with simple poles at points
$c_1,\ldots,c_m$
--- one pole in each bounded domain. Let white vertices of the graph are at
points $a_1,\ldots,a_s$ and black --- at points $b_1,\ldots,b_t$.
So
$$\varphi(z)=\frac{\alpha (z-a_1)^{k_1}\cdots\ldots\cdot(z-a_s)^{k_s}}
{(z-c_1)\cdot\ldots\cdot(z-c_m)}= \frac{\alpha
(z-b_1)^{l_1}\cdots\ldots\cdot(z-b_t)^{l_t}}
{(z-c_1)\cdot\ldots\cdot(z-c_m)}+1.$$ Hence,
$$\alpha (z-a_1)^{k_1}\cdots\ldots\cdot(z-a_s)^{k_s}= \alpha
(z-b_1)^{l_1}\cdots\ldots\cdot(z-b_t)^{l_t}+ (z-c_1)\cdot\ldots\cdot(z-c_m).$$
Equaling coefficients at powers $z^{m+1},\ldots,z^{n-1}$ at both sides of
equation, we obtain a system with $n-1-m=s+t-2$ equations and $s+t$ unknowns
$a_1,\ldots,a_s,b_1,\ldots,b_t$. Without loss of generality, we can assume that
$a_s=0$ and $b_t=1$ (for example) and obtain a polynomial system of $s+t-2$
equations with $s+t-2$ unknowns that has $\leqslant (s+t-2)!$ solutions.

In next section we'll demonstrate that such system can always be transformed
into anti-Vandermonde system.

\section{Reduction to an anti-Vandermonde system}

Let
$$s=\prod_{i=1}^m (x-a_i)^{k_i}=x^n-s_1x^{n-1}+s_2x^{n-2}+\ldots,\, \text{ where }
n=\sum_{i=1}^m k_i.$$

\begin{theor} There exist polynomials $q_i$ with rational coefficients: $q_1$ from
one variable, $q_2$ from two variables and so on, such, that
$q_i(s_1,\ldots,s_i)=k_1a_1^i+\ldots+k_ma_m^i$. Moreover, coefficients of
polynomial $q_i$ depend only on $i$. \end{theor}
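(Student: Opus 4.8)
The plan is to recognize $k_1a_1^i+\ldots+k_ma_m^i$ as the weighted power sums of the multiset in which each $a_j$ is counted with multiplicity $k_j$, and to recognize $s_1,s_2,\ldots$ as the elementary symmetric functions of that same multiset (up to the sign convention built into the expansion $x^n-s_1x^{n-1}+s_2x^{n-2}+\ldots$). The assertion then becomes precisely Newton's identities, together with the observation that these identities are \emph{universal}: their coefficients are fixed integers that know nothing about $m$, about the multiplicities $k_j$, or about the degree $n$. So the real content is to produce the identities in a form that makes this universality transparent, and the natural tool is the logarithmic derivative of the polynomial $s$.

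First I would set $p_i=k_1a_1^i+\ldots+k_ma_m^i$, so that $p_0=\sum_j k_j=n$, and compute $s'/s$. Since $s=\prod_j(x-a_j)^{k_j}$, we have $s'(x)/s(x)=\sum_j k_j/(x-a_j)$, and expanding each summand as a geometric series in $1/x$ gives the generating identity $s'(x)/s(x)=\sum_{i\ge0}p_i\,x^{-(i+1)}$. Clearing denominators rewrites this as $s'(x)=s(x)\sum_{i\ge0}p_i\,x^{-(i+1)}$, an identity of Laurent series whose left-hand side happens to be an honest polynomial.

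Next I would equate the coefficient of $x^{\,n-1-k}$ on both sides. Writing $s=\sum_{i=0}^n(-1)^is_ix^{\,n-i}$ with $s_0=1$, the left-hand side contributes $(-1)^k(n-k)s_k$, while the right-hand side contributes $\sum_{i=0}^{k}(-1)^is_i\,p_{k-i}$. The term $i=k$ on the right equals $(-1)^ks_k\,p_0=(-1)^k n\,s_k$, and it cancels the factor $n$ on the left, leaving the recursion
$$ p_k=(-1)^{k+1}\,k\,s_k+\sum_{i=1}^{k-1}(-1)^{i+1}\,s_i\,p_{k-i}. $$
The point I would stress is exactly this cancellation: $p_0=n$ disappears, so the recursion expresses $p_k$ through $s_1,\ldots,s_k$ and $p_1,\ldots,p_{k-1}$ with coefficients that are absolute integers. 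Defining $q_i$ through this recursion, with base case $q_1(s_1)=s_1$, an immediate induction on $i$ produces polynomials with rational (indeed integer) coefficients satisfying $q_i(s_1,\ldots,s_i)=p_i$; and since the recursion never mentions $n$, $m$, or the $k_j$, the coefficients of $q_i$ depend only on $i$, as claimed.

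There is no genuine obstacle here, only bookkeeping that must be handled correctly. The essential check is that the multiplicities $k_j$ enter solely as the residues of $s'/s$, so that it is the \emph{weighted} power sums $p_i$, not the unweighted ones, that the identities output; the logarithmic derivative makes this automatic. A minor nuisance is the range $i>n$, where $s_i=0$ by convention: the same recursion remains valid because the generating identity holds to all orders, so $q_i$ keeps computing $p_i$ correctly with the surplus variables set to zero.
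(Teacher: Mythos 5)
Your argument is correct, but it reaches the polynomials $q_i$ by a genuinely different mechanism than the paper. The paper never differentiates: it passes to the reversed polynomial $\prod_i(1-xa_i)^{k_i}=1-s_1x+s_2x^2+\ldots$ and expands its formal logarithm in two ways --- once as $\sum_i k_i\ln(1-xa_i)$, whose coefficient of $x^i$ is $-\frac1i\bigl(k_1a_1^i+\ldots+k_ma_m^i\bigr)$, and once as $\ln(1-u)$ with $u=s_1x-s_2x^2+\ldots$, whose coefficient of $x^i$ is a universal polynomial in $s_1,\ldots,s_i$. Equating coefficients yields each $q_i$ in closed form in a single stroke, with no recursion, no induction, and no appearance of $n$ at all, since the reversed polynomial is normalized to constant term $1$ and so $p_0$ never enters. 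You instead take the logarithmic derivative $s'/s$, expand at infinity, and extract the classical Newton recursion $p_k=(-1)^{k+1}ks_k+\sum_{i=1}^{k-1}(-1)^{i+1}s_ip_{k-i}$; this forces you to notice and verify the cancellation of $p_0=n$ (which you do correctly --- it is the crux of the universality claim in your route) and to run an induction, but it buys two things the paper's proof does not make explicit: the coefficients of $q_i$ are in fact integers, not merely rationals, and the regime $i>n$, where $s_i=0$, is handled cleanly because the Laurent identity holds to all orders. Your recursion also reproduces the paper's tabulated values ($p_2=s_1^2-2s_2$, $p_3=3s_3-3s_1s_2+s_1^3$), confirming the sign bookkeeping. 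Both proofs rest on the same underlying observation --- the multiplicities $k_j$ enter only linearly, through logarithms in the paper and through residues of $s'/s$ in your version --- so the trade-off is packaging: the paper gets a direct formula for $q_i$, you get a recursion with slightly stronger conclusions.
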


\begin{proof} Let us consider the product
$$s'=\prod_{i=1}^m (1-xa_i)^{k_i}=1-s_1x+s_2x^2+\ldots.$$ Then
\begin{align*}\ln(s')=\sum_{i=1}^m&k_i\ln(1-xa_i)=\\=&-\left(\sum_{i=1}^m
k_ia_i\right)x- \frac 12 \left(\sum_{i=1}^m
k_ia_i^2\right)x^2-\frac 13 \left(\sum_{i=1}^m k_ia_i^3\right)x^3-
\ldots.\hspace{10mm}(2)\end{align*} On the other hand,
$$\ln(s')=-(s_1x-s_2x^2+\ldots)-\frac 12 (s_1x-s_2x^2+\ldots)^2- \frac 13
(s_1x-s_2x^2+\ldots)^3-\ldots.\eqno(3)$$ Equaling coefficients at powers of $x$
in (2) and (3), we obtain the required polynomials. Thus,
$$\begin{array}{l} q_1=x_1,\,q_2=-2x_2+x_1^2,\,q_3=3x_3-3x_1x_2+x_1^3,\\
q_4=-4x_4+4x_1x_3+2x_2^2-4x_1^2x_2+x_1^4,\\
q_5:=5x_5-5x_1x_4-5x_2x_a3+5x_1^2x_3+5x_1x_2^2-5x_1^3x_2+x_1^5,\\
q_6=-6x_6+6x_1x_5+6x_2x_4+3x_3^2-6x_1^2x_4-\\\hspace{5cm}
-12x_1x_2x_3-2x_2^3+6x_1^3x_3 +9x_1^2x_2^2-6x_1^4x_2+x_1^6.
\end{array}$$ \end{proof}
\par\noindent
Let
$$\prod_{i=1}^s (x-a_i)^{k_i}=x^n-s_1x^{n-1}+s_2x^{n-2}+\ldots$$ and
$$\prod_{i=1}^t (x-b_i)^{l_i}=x^n-t_1x^{n-1}+t_2x^{n-2}+\ldots.$$ Then
$s_i=t_i$ for $i=1,\ldots,s+t-2$, and the system
$$s_1=t_1,\quad q_2(s_1,s_2)=q_2(t_1,t_2),\quad q_3(s_1,s_2,s_3)=
q_3(t_1,t_2,t_3),\quad\ldots$$ is the required ant-Vandermonde system.

\section{Simple, non-decomposable types}

\begin{defin} A type $\Xi=\langle k_1,k_2,\ldots,k_s\,|\, l_1,l_2,\ldots,l_t\rangle$ will
be called \emph{simple}, if numbers $k_1,k_2,\ldots,k_s$ are pairwise different
and numbers $l_1,l_2,\ldots,l_t$ also are pairwise different. $\Xi$ is called
\emph{decomposable}, if there exists a proper subset $I\subset\{1,\ldots,s\}$
and a proper subset $J\subset\{1,\ldots,t\}$ such, that
$$\sum_{i\in I} k_i=\sum_{j\in J} l_j.$$ In this case
$\langle\{k_i\}_{i\in I}\,|\,\{l_j\}_{j\in J}\rangle$ is also a type, which
will be called a \emph{subtype} of the type $\Xi$. \end{defin}

\begin{theor} Let $\langle k_1,k_2,\ldots,k_s\,|\,
l_1,l_2,\ldots,l_t\rangle$ be a simple non-decomposable type. Than
its cardinality is $(s+t-2)!$.
\end{theor}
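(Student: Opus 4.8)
The plan is to count the w-trees of the type by counting solutions of the associated anti-Vandermonde system and showing that the B\'ezout bound $(s+t-2)!$ from Section~2 is attained exactly, with every solution corresponding to a genuine, pairwise non-isotopic w-tree. First I would make the correspondence precise. By Sections~2 and~3, a w-tree of the type determines a Belyi function $\varphi$ and hence a configuration $(a_1,\ldots,a_s,b_1,\ldots,b_t)$ of white and black vertex positions which, after fixing the affine normalization $a_s=0$, $b_t=1$, solves
$$\sum_{i=1}^s k_i a_i^{\,j}=\sum_{j'=1}^t l_{j'} b_{j'}^{\,j},\qquad j=1,\ldots,s+t-2;$$
conversely a solution with pairwise distinct coordinates reconstructs $\varphi$ and the graph $\varphi^{-1}[0,1]$. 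Because the type is \emph{simple}, all vertex weights are distinct, so a w-tree has no nontrivial automorphism and the two-parameter affine normalization pins down $\varphi$ rigidly; thus the correspondence is a bijection between non-isotopic w-trees and normalized solutions with distinct coordinates, and it suffices to count the latter.

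The equations above have degrees $1,2,\ldots,s+t-2$, so B\'ezout bounds the number of solutions of the normalized system by $(s+t-2)!$, matching the claim. The content is therefore to show that the bound is attained, i.e.\ that all solutions are finite, simple, and admissible. Here the shape of the Jacobian is decisive: differentiating the $j$-th equation gives $\partial_{a_i}F_j=j\,k_i a_i^{\,j-1}$ and $\partial_{b_{j'}}F_j=-j\,l_{j'} b_{j'}^{\,j-1}$, so after pulling out the row factors $j$ and the nonzero column factors $k_i,-l_{j'}$ one is left with the Vandermonde matrix in the free coordinates with exponents $0,\ldots,s+t-3$. Hence the Jacobian is nonsingular precisely when those coordinates are pairwise distinct, and a solution with distinct coordinates is automatically a simple (multiplicity-one) solution. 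The two remaining tasks are thus (i) to exclude solutions at infinity and (ii) to exclude coincidences among the coordinates.

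For (ii), a coincidence $a_i=b_{j'}$ cannot occur, since $\varphi$ equals $0$ at a white vertex and $1$ at a black one; and a coincidence $a_i=a_{i'}$ or $b_{j'}=b_{j''}$ would merge two vertices into one whose weight is $k_i+k_{i'}$ (resp.\ $l_{j'}+l_{j''}$), which I expect to exhibit as a proper balanced partition of the weight multisets, i.e.\ a subtype, contradicting \emph{non-decomposability}. So every finite solution has distinct coordinates, is simple, and (by the first paragraph) yields a genuine connected genus-$0$ w-tree.

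The main obstacle I anticipate is (i), controlling the anti-Vandermonde system at infinity. A point at infinity of the normalized system is a common nonzero zero of the top-degree forms $\sum_{i<s}k_i a_i^{\,j}-\sum_{j'<t}l_{j'} b_{j'}^{\,j}$ for $j=1,\ldots,s+t-2$; I expect to interpret such a degenerate limit, via the power-sum theory of anti-Vandermonde systems in \cite{Ko}, as a nontrivial matching of partial weighted sums of the $k_i$ against those of the $l_{j'}$, that is, again a decomposition of the type, which non-decomposability forbids. This is the step that genuinely uses the full strength of both hypotheses and the structure theory of anti-Vandermonde systems. Once (i) and (ii) are established, all $(s+t-2)!$ B\'ezout solutions are finite, simple, and admissible, the bijection of the first paragraph turns them into distinct non-isotopic w-trees, and therefore $|\Xi|=(s+t-2)!$.
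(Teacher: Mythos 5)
Your overall strategy is the same as the paper's: normalize $a_s=0$, $b_t=1$, pass to the anti-Vandermonde system of degrees $1,\dots,s+t-2$, and show the B\'ezout bound $(s+t-2)!$ is attained by excluding (i) solutions at infinity and (ii) multiple/degenerate solutions; your Jacobian factorization (row factors $j$, column factors $k_i$, $-l_{j'}$, a Vandermonde in the free coordinates) is essentially the computation the paper performs. But the two steps you flag are exactly where the content of the theorem lies, and your sketches of them do not close. For (i) you only say you ``expect'' a solution at infinity to encode a balanced matching of partial weighted sums; the paper's actual mechanism is a specific idea you are missing: regard the top-degree homogeneous system as a \emph{linear} homogeneous system in the weights $k_1,\dots,k_{s-1},l_1,\dots,l_{t-1}$. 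A nonzero solution forces the generalized Vandermonde determinant
$$\prod_i x_i\prod_j y_j\prod_{i<j}(x_i-x_j)\prod_{i<j}(y_i-y_j)\prod_{i,j}(x_i-y_j)$$
to vanish, so some coordinate is zero or two coincide; one then eliminates or merges (replacing $k_i$ by $k_i+k_j$, $l_i+l_j$, or $k_i-l_j$), drops the last equation, and iterates. Since not all coordinates can vanish, some merged coefficient eventually becomes $0$, i.e.\ a sub-sum of white weights equals a sub-sum of black weights, contradicting non-decomposability. Without this (or an equivalent) reduction, your step (i) is an unproved assertion, and it is the step that genuinely uses the hypothesis.

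Your step (ii) has a circularity: you rule out $a_i=b_{j'}$ by appealing to the values of $\varphi$ at white and black vertices, but $\varphi$ exists only for solutions already known to come from a w-tree; an arbitrary root of the polynomial system need not carry a Belyi function, and that is precisely what must be excluded. Likewise, merging $a_i=a_{i'}$ into a vertex of weight $k_i+k_{i'}$ does not by itself exhibit a balanced partition of $\Xi$ --- it yields a solution for a smaller, \emph{different} type, not a decomposition of the given one. The paper instead runs the same merge-and-reduce chain inside the multiple-solution analysis: each vanishing or coincident coordinate lowers the number of unknowns by one, and the terminal system $ly+1=ly^2+1=\cdots=ly^n+1=0$ forces $y=1$, $l=-1$, which again means a balanced sub-sum and hence decomposability. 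Note also that your Jacobian criterion only detects coincidences among the \emph{free} coordinates: a solution with, say, $a_1=0=a_s$ or $b_1=1=b_t$ would count as simple by your criterion yet correspond to no w-tree, so such solutions must also be excluded by the reduction argument rather than by the Jacobian.
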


\begin{rem} This theorem was proved by V.Dremov \cite{Dr} in his thesis in assumption that the
corresponding anti-Vandermonde system doesn't have any multiple solutions. \end{rem}

\begin{proof} Let us assume that the white vertex of the biggest weight $k_s$ is at
point $0$ and the black vertex of the biggest weight $l_t$ --- at $1$. The
corresponding anti-Vandermonde system is of the form:
$$\left\{\begin{array}{l} k_1x_1+\ldots+k_{s-1}x_{s-1}-l_1y_1-\ldots-
l_{t-1}y_{t-1}-l_t=0\\ \\ \qquad \cdots \qquad \cdots \qquad
\cdots \qquad\cdots\qquad\cdots\qquad\cdots\\ \\k_1x_1^{s+t-2}+
\ldots+k_{s-1}x_{s-1}^{s+t-2}-l_1y^{s+t-2}-\ldots-
l_{t-1}y_{s-1}^{l+s-2}-l_t=0\end{array}\right.$$ At first we'll
prove that the system has no solutions at infinity.
\par\medskip\noindent
A solution at infinity satisfies the system
$$\left\{\begin{array}{l} k_1x_1+\ldots+k_{s-1}x_{s-1}-l_1y_1-\ldots-
l_{t-1}y_{t-1}=0\\ \\ \qquad \cdots \qquad \cdots \qquad \cdots \qquad\cdots
\qquad\cdots\qquad\cdots\\
\\k_1x_1^{s+t-2}+ \ldots+k_{s-1}x_{s-1}^{s+t-2}-l_1y^{s+t-2}-\ldots-
l_{t-1}y_{s-1}^{l+s-2}=0\end{array}\right.$$ We can consider it as a
homogeneous system with unknowns $k_i,l_j$. As this system has a nonzero
solution, then the determinant of the system is zero, i.e.
$$\prod_{i=1}^{s-1}x_i\prod_{j=1}^{t-1}y_j\prod_{1\leqslant i<j\leqslant
t-1}(x_i-x_j)\prod_{1\leqslant i<j\leqslant
t-1}(y_i-y_j)\prod_{\scriptsize\begin{array}{l}1\leqslant i\leqslant s-1\\
1\leqslant j\leqslant t-1\end{array}}(x_i-y_j)=0.$$
\begin{itemize}
    \item If some $x_i=0$ (or $y_j=0$), then we eliminate terms $k_ix_i^r$ (or
    $l_jy_j^r$), $r=1,\ldots,s+t-2$, from the system.
    \item If $x_i=x_j$ (or $y_i=y_j$, or $x_i=y_j$), then we eliminate terms
    $k_jx_j^r$ (or $l_jy_j^r$) and coefficient $k_i$ ($l_i$) we replace by
    $k_i+k_j$ (by $l_i+l_j$, or by $k_i-l_j$).
\end{itemize} Also we eliminate the last equation.

We continue in the same way. As all variables $x_i$ and $y_j$ cannot be zeroes,
then at some step the coefficient at some variable $x_i$ or $y_j$ will become
zero. But this means that the type is decomposable.

To prove that our system has no multiple solutions let us consider a
anti-Vandermonde system
$$\begin{array}{l}k_1x_1+\ldots+k_nx_n+1=0\\ \qquad
\cdots \qquad \cdots \qquad \cdots \qquad\\
k_1x_1^n+\ldots+k_nx_n^n+1=0\end{array}$$ It defines a multivalued map from
$n$-dimensional space $K$ with coordinates $k_1,\ldots,k_n$ to $n$-dimensional
space $X$ with coordinates $x_1,\ldots,x_n$. The Jacobian of this map is
$$\prod_{i=1}^n x_i \prod_{1\leqslant i<j\leqslant n} (x_i-x_j)/n!
\prod_{i=1}^n k_i \prod_{1\leqslant i<j\leqslant n} (x_i-x_j).$$ We see that a
multiple solution can occur only when one of unknowns is zero,  or when values
of two unknowns coincide. In both cases we can diminish the number of unknowns
by one.

In the end we will obtain a system of the form
$$ly+1=ly^2+1=\ldots=ly^n+1=0,$$ that has a solution only if $l=-1$, but then our
type is decomposable.

We see that our system does not have solutions on infinity and does not have
multiple solutions. Hence, it has $(s+t-2)!$ finite pairwise different
solutions. \end{proof}

\section{Simple decomposable types}

If a type $\Xi$ is simple, but decomposable, then the the number
of its w-trees is less then $(v(\Xi)-2)!$.
\begin{ex} We have:
$$\begin{array}{ll}|\langle 1,5,7\,|\,2,4,7\rangle|=18, & |\langle
1,3,11\,|\,4,5,6\rangle|=20,\\
|\langle 1,2,4\,|\,1,2,4\rangle|=11, & |\langle 1,2,4,5\,|\,1,2,9\rangle|=72,\\
|\langle 1,2,3\,|\,1,2,3\rangle|=7, & |\langle 1,2,6,10\,|\,4,5,10\rangle|=96.
\end{array}$$
\end{ex}

\begin{defin} We say that a type $\Xi=\langle k_1,k_2,\ldots,k_s\,|\,
l_1,l_2,\ldots,l_t\rangle$ admits a $n$-partition if there exit
disjoint subsets $I_1,\ldots,I_n$ of the set $\{1,\ldots,s\}$, and
disjoint subsets $J_1,\ldots,J_n$ of the set $\{1,\ldots,t\}$ such
that
$$\bigcup_{p=1}^n I_p=\{1,\ldots,s\}, \quad \bigcup_{p=1}^n J_p=
\{1,\ldots,t\}$$ and
$$\sum_{i\in I_1}k_i=\sum_{j\in J_1}l_j,\quad\ldots,\quad \sum_{i\in I_n} k_i=
\sum_{j\in J_n}l_j.$$ We will say that the type $\Xi$ is a union of subtypes
$\Xi_1,\ldots,\Xi_n$: $\Xi=\Xi_1\cup\ldots\cup\Xi_n$, where
$$\Xi_1=\langle \{k_i\,|\,i\in I_1\}\,|\,\{l_j\,|\,j\in J_1\}\rangle,\ldots, \Xi_n=
\langle \{k_i\,|\,i\in I_n\}\,|\,\{l_j\,|\,j\in J_n\}\rangle.$$
\end{defin}

\begin{rem} The type itself can be considered as its 1-partition.\end{rem}

\begin{theor} The number of w-trees in a type $\Xi$ is a sum by
partitions of $\Xi$, where to each partition $\Xi=\Xi_1\cup\ldots\cup\Xi_n$,
$n=1,2,\ldots$, corresponds a summand
$$(-1)^{n-1}(v(\Xi)-1)^{n-2}\prod_{m=1}^n (v(\Xi_i)-1)!$$ \end{theor}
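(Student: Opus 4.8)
The plan is to push the algebraic geometry of \S2--\S3 one step further. Fix the white vertex of weight \(k_s\) at \(0\) and the black vertex of weight \(l_t\) at \(1\), so that \(\Xi\) is encoded by the anti-Vandermonde system in \(v(\Xi)-2\) unknowns. Its equations have degrees \(1,2,\ldots,v(\Xi)-2\), so by B\'ezout the number of solutions in \(\mathbb{P}^{\,v(\Xi)-2}\), counted with multiplicity, is exactly \((v(\Xi)-2)!\); this is the leading (\(n=1\)) term of the claimed sum, since there it reads \((v(\Xi)-1)^{-1}(v(\Xi)-1)!=(v(\Xi)-2)!\). By the Jacobian computation used in the proof of Theorem 2, a solution is a genuine w-tree precisely when it is finite with all coordinates nonzero, the \(x_i\) pairwise distinct, the \(y_j\) pairwise distinct, and no \(x_i\) equal to a \(y_j\); every such solution is simple. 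Thus \(|\Xi|\) is obtained from \((v(\Xi)-2)!\) by discarding the \emph{degenerate} solutions --- those at infinity or with a forbidden coincidence --- and the task is to organise these.

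First I would classify the degenerate solutions. The reduction step of Theorem 2 shows that every coincidence (some \(x_i=0\), \(x_i=x_j\), \(x_i=y_j\), or a point escaping to infinity) forces a balanced sub-collection of weights, i.e.\ a subtype; iterating, each degenerate solution determines a balanced decomposition \(\Xi=\Xi_1\cup\cdots\cup\Xi_n\) with \(n\geq 2\). I would therefore stratify the degenerate locus by the induced (unordered) partition. The geometric content I expect is that along the stratum of a fixed partition the system \emph{factors}: to leading order the surviving equations inside each block \(\Xi_m\) are the anti-Vandermonde system of that block, while the relative scalings and positions of the \(n\) blocks contribute a separate combinatorial factor. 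Writing \((v(\Xi_m)-1)!=(v(\Xi_m)+1-2)!\) for the block contribution is natural here: it is the B\'ezout number of the block after one extra marked point --- the edge-slot through which the block is attached --- has been adjoined.

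Next I would assemble the strata by inclusion--exclusion. Because a block \(\Xi_m\) that is itself decomposable is over-counted when one uses its full B\'ezout number \((v(\Xi_m)-1)!\) rather than its true tree count, the correction must range over the \emph{entire} lattice of balanced partitions, refinements included; the alternating sign \((-1)^{n-1}\) is the M\"obius sign of this inclusion--exclusion. Solving \((v(\Xi)-2)! = |\Xi| + (\text{degenerate contributions})\) for \(|\Xi|\) then produces the stated alternating sum, provided the coefficient attached to an \(n\)-block partition is \((v(\Xi)-1)^{n-2}\prod_m (v(\Xi_m)-1)!\).

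The main obstacle is precisely the evaluation of the reconnection coefficient \((v(\Xi)-1)^{n-2}\). This is a \emph{weighted} Cayley--Pr\"ufer factor, not the \((n-1)!\) that the M\"obius function of the plain set-partition lattice would give: I expect it to count the ways the \(n\) blocks are tree-connected through the \(v(\Xi)-1\) available edge-slots (a Pr\"ufer word of length \(n-2\) with entries among \(v(\Xi)-1\) slots). Making this rigorous means computing the intersection multiplicity of the anti-Vandermonde system along each stratum at infinity and showing it splits as (product of block contributions) \(\times\) (this tree factor); the delicate bookkeeping of those multiplicities, together with the verification that the resulting alternating sum telescopes to the exact integer \(|\Xi|\) --- as in \(\langle 1,2,3\mid 1,2,3\rangle\), where it reads \(24-22+5=7\) --- is the heart of the argument. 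A fully combinatorial alternative would be to prove the identity by exhibiting a gluing bijection that realises an edge-marked w-tree of type \(\Xi\) as a block-tree assembled from edge-marked w-trees of the subtypes, and then to extract \((v(\Xi)-1)^{n-2}\prod_m (v(\Xi_m)-1)!\) from the exponential (Lagrange-inversion) formula for such assemblies.
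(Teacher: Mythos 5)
There is a genuine gap, and it sits exactly where you locate it yourself: the evaluation of the reconnection coefficient $(v(\Xi)-1)^{n-2}$ is not a deferrable bookkeeping step but the entire content of the theorem, and your proposal supplies no mechanism for it. Worse, the framework you choose --- reading the per-partition corrections off as intersection multiplicities of the \emph{fixed} anti-Vandermonde system --- breaks down before the bookkeeping can start, because the degenerate locus is in general not zero-dimensional. Concretely, suppose $\Xi$ admits a 3-partition $\Xi_1\cup\Xi_2\cup\Xi_3$ with the two pinned vertices in $\Xi_3$. The leading forms of the equations are $\sum_i k_ix_i^r-\sum_j l_jy_j^r$, and since each block is balanced, the point at infinity with all $\Xi_1$-coordinates equal to $a$, all $\Xi_2$-coordinates equal to $b$, and all $\Xi_3$-coordinates equal to $0$ satisfies every leading form for \emph{every} $(a:b)\in\mathbb{P}^1$: each equation evaluates to $a^r\cdot 0+b^r\cdot 0$. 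So the scheme at infinity contains a line, B\'ezout with multiplicities becomes an excess-intersection problem, and there is no well-defined ``number of degenerate solutions attached to a partition'' to subtract. Your Pr\"ufer-word heuristic for the coefficient also does not survive inspection as stated: the $n$ blocks together carry only $v(\Xi)-n$ edges (the other $n-1$ edges of an assembled tree are the connecting bridges), so there are no $v(\Xi)-1$ uniform ``slots'' for a word of length $n-2$, and a weighted Cayley count with slot weights $v(\Xi_m)-1$ would produce $\bigl(v(\Xi)-n\bigr)^{n-2}\prod_m\bigl(v(\Xi_m)-1\bigr)$, not the stated factor.

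The paper circumvents precisely this degeneracy by deforming the \emph{weights} rather than stratifying the fixed system: increase the weight of a white vertex in one block and of a black vertex in another block by the same small $\varepsilon$. The deformed type is less decomposable (non-decomposable in the base case), so Theorem 2 applies and gives exactly $(v(\Xi)-2)!$ honest trees with no multiplicity issues; the trees that fail to survive the limit $\varepsilon\to 0$ are exactly those containing an edge of weight $\varepsilon$ (an $\varepsilon$-bridge), and these are counted by elementary choices --- a tree in each block, and per bridge an attachment edge with direction, e.g. $(k-2)!\,(l-2)!\,(k-1)(l-1)=(k-1)!\,(l-1)!$ for a single bridge between blocks with $k$ and $l$ vertices. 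An induction over the partition lattice then reduces the theorem to the combinatorial identity that the signed sum of the per-class coefficients over admissible partitions equals $(v(\Xi)-1)^n$, which the paper proves not by a Cayley--Pr\"ufer bijection but by inclusion--exclusion realized geometrically as a covering of the cube $[0,v(\Xi)-1]^n$ by parallelotopes. Your closing suggestion (a gluing bijection for bridged assemblies plus Lagrange inversion) is much closer in spirit to this argument than your main intersection-theoretic plan; to make it work you would still need the deformation device, since it is the $\varepsilon$-perturbation that explains why the full B\'ezout number counts genuine trees \emph{plus} bridged assemblies, which is the identity your inclusion--exclusion would have to telescope.
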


\begin{ex}
\par\noindent
\begin{itemize}
    \item The type $\langle 7,5,1\,|\,7,4,2\rangle$ has only one nontrivial partition
    (2-partition): $\langle 7,5,1\,|\,7,4,2\rangle
    =\langle 7\,|\,7\rangle\cup \langle 5,1\,|\,4,2\rangle$. Hence,
    $|\langle 7,5,1\,|\,7,4,2\rangle=4!-3!=18$.
    \item The type $\langle 4,2,1\,|\,4,2,1\rangle$ has following nontrivial
    partitions:
    \begin{enumerate}
        \item 2-partition $\langle 4\,|\,4\rangle\cup\langle
        2,1\,|\,2,1\rangle$;
        \item 2-partition $\langle 2\,|\,2\rangle\cup\langle
        4,1\,|\,4,1\rangle$;
        \item 2-partition $\langle 1\,|\,1\rangle\cup\langle
        4,2\,|\,4,2\rangle$;
        \item 3-partition $\langle 4\,|\,4\rangle\cup \langle
        2\,|\,2\rangle\cup \langle 1\,|\,1\rangle$.
    \end{enumerate} Hence, $|\langle 4,2,1\,|\,4,2,1\rangle|=
    4!-3!-3!-3!+5=11$.
    \item The type $\langle 3,2,1\,|\,3,2,1\rangle$ has following nontrivial
    partitions:
    \begin{enumerate}
        \item 2-partition $\langle 3\,|\,3\rangle\cup\langle
        2,1\,|\,2,1\rangle$;
        \item 2-partition $\langle 2\,|\,2\rangle\cup\langle
        3,1\,|\,3,1\rangle$;
        \item 2-partition $\langle 1\,|\,1\rangle\cup\langle
        3,2\,|\,3,2\rangle$;
        \item 2-partition $\langle 3\,|\,2,1\rangle\cup\langle
        2,1\,|\,3\rangle$;
        \item 3-partition $\langle 3\,|\,3\rangle\cup \langle
        2\,|\,2\rangle\cup \langle 1\,|\,1\rangle$.
    \end{enumerate} Hence, $|\langle 3,2,1\,|\,3,2,1\rangle|=
    4!-3!-3!-3!-2!\cdot 2!+5=7$.
\end{itemize} \end{ex}

\begin{rem} We can consider w-trees with non integral weights: the sum of
white weights must be equal to the sum of black. A solution of the
corresponding anti-Vandermonde system gives us positions of vertices.
\end{rem}

\begin{proof} \underline{The first step.} Let our type admits only one
partition, namely 2-partition $\Xi=\Xi'\cup\Xi''$, $v(\Xi')=k$, $v(\Xi'')=l$.

Let us define a "deformation" $\widetilde{\Xi}$ of our type $\Xi$: we increase
the weight of some white vertex $v$ from $\Xi'$ by small $\varepsilon$ and
increase the weight of some black vertex $u$ from $\Xi''$ by the same
$\varepsilon$. Thus obtained type is non-decomposable and contains
$(v(\Xi)-2)!$ w-trees. If $T$ is a w-tree from $\widetilde{\Xi}$, then weights
of its edges are positive integers, except edges of the path from $v$ to $u$.
The weight of such edges differs from integer on $\varepsilon$. If some edge of
this path has weight $\varepsilon$ (such edge will be called
$\varepsilon$-bridge), then such tree doesn't exist in the type $\Xi$
(connectivity is lost, when $\varepsilon$ becomes zero).

Thus, we need to find the number of w-trees in the type $\widetilde{\Xi}$ that
have $\varepsilon$-bridge. Let us note that $\varepsilon$-bridge connects a
white vertex $x$ from the subtype $\Xi'$ and a black vertex $y$ from the
subtype $\Xi''$: to a white vertex $x$ a path from $v$ comes with weight
$p-\varepsilon$ (where $p$ is an integer) and to a black vertex $y$ a path from
$u$ comes with weight $q-\varepsilon$. Hence, we can construct a tree with
$\varepsilon$-bridge in the following way: we choose a tree $T'$ from $\Xi'$
and a tree $T''$ from $\Xi''$ ($(k-2)!(l-2)!$ variants of such double choice);
we choose a white vertex of $T'$ and a direction of outgoing
$\varepsilon$-bridge (the number of such choices is equal to the number of $T'$
edges, i.e. $k-1$); we choose a black vertex from $T''$ and the direction of
ingoing $\varepsilon$-bridge ($l-1$ variants). Thus, $|\Xi|=(s+t-2)!-(k-1)!
(l-1)!$.
\par\medskip\noindent
\underline{Second step.} Let our type admits only 2-partitions and
$\Xi'_1\cup\Xi''_1,\ldots,\Xi'_n\cup\Xi''_n$ are all 2-partitions of the type
$\Xi$. Let $k_i$ be the number of vertices of subtype $\Xi'_i$ and $l_i$ be the
number of vertices of subtype $\Xi''_i$.

Let us increase the weight of a white vertex $x$ from $\Xi'_n$ by $\varepsilon$
and the weight of a black vertex $y$ from $\Xi''_n$ by the same $\varepsilon$.
Thus obtained deformed type $\widetilde{\Xi}$ can have only 2-partitions (the
number of such 2-partitions is strictly less, then $n$). Let us assume that
exactly first $k$ partitions of the type $\Xi$ are partitions of
$\widetilde{\Xi}$, i.e. vertices $x$ and $y$ both belong to $\Xi'_i$ or to
$\Xi''_i$, if $i\leqslant k$, and if $i>k$ then one of these two vertices
belongs to $\Xi'_i$ and another --- to $\Xi''_i$.

By induction
$$|\widetilde{\Xi}|=(s+t-2)!-\sum_{i=1}^k(k_i-1)!(l_i-1)!.$$ Now it remains to
find the number of trees in the type $\widetilde{\Xi}$ with
$\varepsilon$-bridge.

Let $T$ be such tree. If $\varepsilon=0$, then $T$ loses connectedness and is a
union of (exactly) two subtrees $T_1$ and $T_2$, where $T_1$ belongs to subtype
$\Xi'_i$ and $T_2$ --- to subtype $\Xi''_i$ for some $i>k$. As above we have
that the number of such trees $T$ is $(k_i-1)!(l_i-1)!$. And summation by $i$,
$k<i\leqslant n$, gives us the required formula.
\par\medskip\noindent
\underline{Third step.} Let now the type $\Xi$ has exactly one 3-partition
$\Xi=\Xi_1\cup\Xi_2\cup\Xi_3$ and the only 2-partitions of $\Xi$ are
$$\Xi_1\oplus(\Xi_2\cup\Xi_3),\,\,\Xi_2\oplus(\Xi_1\cup\Xi_3),\, \text{ and }
\Xi_3\oplus(\Xi_1\cup\Xi_2),$$ where by $\Xi'\oplus\Xi''$  we denote the type
whose set of white (black) weights is a union of set of white (black) weights
of type $\Xi'$ and set of white (black) weights of type $\Xi'$. Let subtypes
$\Xi_1$, $\Xi_2$ and $\Xi_3$ have $k$, $l$ and $m$ vertices respectively.

Let us increase weight of some white vertex from $\Xi_1$ by $\varepsilon$ and
weight of some black vertex from $\Xi_2$ by the same $\varepsilon$. Thus
obtained type $\widetilde{\Xi}$ has only partition --- 2-partition
$(\widetilde{\Xi_1}\oplus\widetilde{\Xi_2})\cup\Xi_3$. Hence,
$|\widetilde{\Xi}|=(k+l+m-2)!-(m-1)!(k+l-1)!$.

Now we must find the number of trees with $\varepsilon$-bridge. There can be
one bridge or two. In the first case either a tree from $\Xi_1$ is connected by
$\varepsilon$-bridge with a tree from $\Xi_2\oplus\Xi_3$, or a tree from
$\Xi_2$ is connected by $\varepsilon$-bridge with a tree from
$\Xi_1\oplus\Xi_3$. Hence, the number of trees with one $\varepsilon$-bridge is
\begin{multline*}(k-1)!(l+m-1)((l+m-2)!-(l-1)!(m-1)!)+\\
+(l-1)!(k+m-1)((k+m-2)!-(k-1)!(m-1)!).\end{multline*}

A tree with two $\varepsilon$-bridges can be constructed in the following way:
we choose a tree $T_1$ from $\Xi_1$, a tree $T_2$ from $\Xi_2$ and a tree $T_3$
from $\Xi_3$. A white vertex from $T_1$ we connect by $\varepsilon$-bridge with
a black vertex from $T_3$ and a white vertex from $T_3$ --- with a black vertex
from $T_2$. Hence the number of trees with two $\varepsilon$-bridges is
$(k-1)!(l-1)!(m-1)!(m-1)$. Now it is easy to check that we obtain the required
formula.

\par\medskip\noindent
\underline{Forth step.} Let the type has only 2- and 3-partitions. Then we
increase weight of some white vertex by $\varepsilon$ and weight some black
vertex by $\varepsilon$ with the aim to decrease the number of 3-partitions.
After that we use induction as in the second step.
\par\medskip\noindent
\underline{Fifth step.} Let type $\Xi$ has one 4-partition
$\Xi=\Xi_1\cup\Xi_2\cup\Xi_3\cup\Xi_4$ and all its 2- and 3-partitions are
constructed from subtypes $\Xi_i$. Let  the number of vertices in $\Xi_1$ be
$k$, in $\Xi_2$ --- $l$, in $\Xi_3$ --- $m$ and in $\Xi_4$ --- $n$.

As above let us define a deformation $\widetilde{\Xi}$ of the type $\Xi$, by
increasing the weight of some white vertex in $\Xi_1$ by $\varepsilon$ and some
black vertex in $\Xi_2$ by the same $\varepsilon$. After that we must find the
number of trees in $\widetilde{\Xi}$ with $\varepsilon$-bridges and subtract
this number from $|\widetilde{\Xi}|$. It is enough to check that thus obtained
expression contains the term $(k+l+m+n-1)^2(k-1)!(l-1)!(m-1)!(n-1)!$ (other
terms are related to 2- and 3-partitions). We will enumerate classes of trees
with bridges and for each class we will study with what coefficient the product
$(k-1)!(l-1)!(m-1)!(n-1)!$ appears in the formula for number of trees in this
class.
\begin{itemize}
    \item Class: $(\Xi_1\oplus\Xi_3\oplus\Xi_4)\cup\Xi_2$. One bridge.
    Coefficient: $(k+m+n-1)^2$.
    \item Class: $(\Xi_1\oplus\Xi_3)\cup(\Xi_2\oplus\Xi_4)$. One bridge.
    Coefficient: $(k+m-1)(l+n-1)$.
    \item Class: $(\Xi_1\oplus\Xi_4)\cup(\Xi_2\oplus\Xi_3)$. One bridge.
    Coefficient: $(k+n-1)(l+m-1)$.
    \item Class: $\Xi_1\cup(\Xi_2\oplus\Xi_3\oplus\Xi_4)$. One bridge.
    Coefficient: $(l+m+n-1)^2$.
    \item Class: $(\Xi_1\oplus\Xi_3)\cup\Xi_4\cup\Xi_2$. Two bridges.
    Coefficient: $-(k+m-1)(n-1)$.
    \item Class: $(\Xi_1\oplus\Xi_4)\cup\Xi_3\cup\Xi_2$. Two bridges.
    Coefficient: $-(k+n-1)(m-1)$.
    \item Class: $\Xi_1\cup(\Xi_3\oplus\Xi_4)\cup\Xi_2$. Two bridges.
    Coefficient: $-(m+n-1)^2$.
    \item Class: $\Xi_1\cup\Xi_3\cup(\Xi_2\oplus\Xi_4)$. Two bridges.
    Coefficient: $-(l+n-1)(m-1)$.
    \item Class: $\Xi_1\cup\Xi_4\cup(\Xi_2\oplus\Xi_3)$. Two bridges.
    Coefficient: $-(l+m-1)(n-1)$.
    \item Class: $\Xi_1\cup\Xi_3\cup\Xi_4\cup\Xi_2$. Three bridges.
    Coefficient: $2(m-1)(n-1)$.
\end{itemize}
It is easy to find the sum of coefficients
\begin{multline*} (k+m+n-1)^2+(k+m-1)(l+n-1)+(k+n-1)(l+m-1)+(l+m+n-1)^2-\\
-(k+m-1)(n-1)-(k+n-1)(m-1)-(m+n-1)^2-(l+m-1)(n-1)-\\-(l+n-1)(m-1)+2(m-1)(n-1)
=(k+l+m+n-1)^2.\end{multline*} There is a geometrical interpretation of this
combinatorial identity.

Let us consider a square $A=[0,k+l+m+n-1]\times
[0,k+l+m+n-1]\subset\mathbb{R}^2$ and four rectangles in it:
$$\begin{array}{l}A_1=[0,k+m+n-1]\times [0,k+m+n-1],\\
A_2=[k,k+l+m+n-1]\times[k,k+l+m+n-1],\\
A_3=[0,k+m-1]\times[k+m,k+l+m+n-1],\\ A_4=[k+n,k+l+m+n-1]\times[0,k+n-1].
\end{array}$$  It is easy to see that $A=A_1\cup A_2\cup A_3\cup A_4$ and
$$\begin{array}{l} A_1\cap\ A_2=[k,k+m+n-1]\times[k,k+m+n-1],\\ A_1\cap
A_3=[0,k+m-1]\times[k+m,k+m+n-1],\\ A_1\cap A_4=[k+n,k+m+n-1]\times
[0,k+n-1],\\ A_2\cap A_3=[k,k+m-1]\times[k+m,k+l+m+n-1],\\ A_2\cap A_4=
[k+n,k+l+m+n-1]\times[k,k+n-1],\\ A_3\cap A_4=\varnothing,\\ A_1\cap A_2\cap
A_3=[k,k+m-1]\times[k+m,k+m+n-1],\\ A_1\cap A_2\cap A_4=[k+n,k+m+n-1]\times
[k,k+n-1].\end{array}$$ Thus, our identity is simply the inclusion-exclusion
formula.
\par\medskip\noindent
\underline{Sixth step.} In the general case we consider a partition of the type
$\Xi$, analogous to the partition in the fifth step, but (n+2)-partition.
Analogously we define deformation $\widetilde{\Xi}$. We must find the
coefficient at product $\prod_{i=1}^{n+2}(m_i-1)!$ in the formula, that
describe the number of trees with bridges (here $m_i=v(\Xi_i)$,
$i=1,\ldots,n+2$). Let $M=m_1+\ldots+m_{n+2}-1$.

As above we will enumerate classes of trees with bridges and for
each class we will find the corresponding coefficient. A class is
defined by such partition (admissible partition) of the set
$\{1,\ldots,n+2\}$ into disjoint subsets, that numbers 1 and 2
belong to \emph{different} subsets. To a given admissible
partition we correspond a product by subsets of this partition and
then we compute a sum by all admissible partitions.

To a subset $\{s_1,\ldots,s_p\}\subset\{1,\ldots,n+2\}$ of given partition we
correspond the factor
$$(m_{s_1}+\ldots+m_{s_p}-1)^p,\text{ if }\{1,2\}\cap\{s_1,\ldots,s_p\}=
\varnothing,$$ and the factor
$$(m_{s_1}+\ldots+m_{s_p}-1)^{p-1},\text{ if
} s_1=1 \text{ or } s_1=2.$$ Thus obtained product by subsets of the partition
we multiply by $(-1)^{q-2}(q-2)!$, where $q$ is the number of subsets in the
partition.

We must prove, that the sum by all admissible partitions is equal
to $M^n$. The proof will use the inclusion-exclusion formula.

Let us consider an $n$ dimensional cube $C$ and right parallelopes in it. The
cube $C$ is  the Cartesian product: $C=[0,M]^n$. A right parallelotope is a
Cartesian product of $n$ segments. Parallelotopes under consideration are
enumerated by subsets (proper and non-proper) of the set $N=\{3,\ldots,n+2\}$.
Given the set $A=\{i_1,\ldots,i_s\}\subset\{3,\ldots,n+2\}$ we define segments
$[a_i,b_i]$, $i=1,\ldots,n$,
$$a_i=\begin{cases}\begin{array}{ll}0,&\text{ if } i+2\in A,\\ m_1+S_A,&\text{ if }
i+2\notin A;\end{array}
\end{cases}\quad b_i=\begin{cases}\begin{array}{ll}m_1+S_A-1,&\text{ if }
i+2\in A,\\ M,&\text{ if } i+2\notin A.\end{array}\end{cases}$$ and
parallelotope $P_A=[a_1,b_1]\times\ldots\times[a_n,b_n]\subset C$ (here
$S_A=\sum_{i\in A}m_i$) with volume
$$\text{volume}(P_A)=\left(m_1+S_A-1\right)^{|A|}\left(M-m_1-S_A\right)^{n-|A|}.$$
We must prove that $\bigcup_A P_A=C$. Let $X=(x_1,\ldots,x_n)$ be a point in
$C$. If $x_i\leqslant m_1+m_{i+2}-1$, $i=1,\ldots,n$, then $X\in P_N$. If
$x_i\geqslant m_1+m_{i+2}$, $i=1,\ldots,n$, then $X\in P_\varnothing$. Let now
$$x_i\leqslant m_1+m_{i+2}-1,\,i=1,\ldots,s\text{ and } x_i\geqslant m_1+m_{i+2},\,
i=s+1,\ldots,n$$ and let us denote by $F$ the set $\{3,\ldots,s+2\}$. If
$x_i\leqslant m_1+S_F-1, i=s+1,\ldots,n$, then $X\in P_N$. If $x_i\geqslant
m_1+S_F, i=s+1,\ldots,n$, then $X\in P_F$. If $m_1+S_F-1\leqslant
x_{i-2}\leqslant m_1+S_F$, for $i\in G\subset \{s+3,\ldots,n+2\}$, then instead
of $F$ we will consider the set $F\cup G$. And so on.

Parallelotops $P_A$ and $P_B$ have a non-empty intersection only
if $A\subset B$ (or $B\subset A$). In this case
$$V(P_A\cap
P_B)=(k+S_B-1)^{|S_B|}(S_A-S_B-1)^{|S_A)-|S_B|}(M-k-S_A)^{n-|S_A|}.$$ Triple
intersections $P_{A\cup B\cup C}\cap P_{A\cup B}\cap P_A$ and $P_{A\cup B\cup
C}\cap P_{A\cup C}\cap P_A$, where $A,B,C\subset\{3,\ldots,n+2\}$ are pairwise
disjoint subsets, have the same volume. Analogously, for each non-empty
intersection $P_A\cap P_B\cap P_C\cap P_D$ there are five more with the same
volume.
\end{proof}

\section{Non-simple types}

If a type $\Xi$ is not simple, then a numeration of vertices (or a
labeling of vertices) makes the type simple. Thus obtained type
will be called a "derivative" type and will be denoted $\Xi'$.

\begin{ex} If $\Xi=\langle 6,1,1,1\,|\,3,3,3\rangle$, then
$\Xi'=\langle 6,1,1',1''\,|\,3,3',3''\rangle$. This type is simple, but
decomposable ---  it admits three 2-partitions:
\begin{enumerate}
\item $\langle 6\,|\,3,3'\rangle\cup\langle 1,1',1''\,|\,3''\rangle$; \item
$\langle 6\,|\,3,3''\rangle\cup\langle 1,1',1''\,|\,3'\rangle$; \item $\langle
6\,|\,3',3''\rangle\cup\langle 1,1',1''\,|\,3\rangle$.
\end{enumerate} Hence, there are 84
w-trees in this type. 72 of them are of the form
\[\begin{picture}(100,50) \put(0,15){\circle{4}} \put(20,15){\circle*{4}}
\put(50,15){\circle{4}} \put(50,45){\circle*{4}} \put(80,15){\circle*{4}}
\put(80,35){\circle{4}} \put(100,15){\circle{4}} \put(2,15){\line(1,0){46}}
\put(52,15){\line(1,0){46}} \put(50,17){\line(0,1){28}}
\put(80,15){\line(0,1){18}} \put(-1,3){1} \put(18,3){3} \put(48,3){6}
\put(78,3){3} \put(99,3){1} \put(55,41){3} \put(85,31){1}
\end{picture}\] and 12 are symmetric with the third order symmetry
\[\begin{picture}(90,50) \put(0,25){\circle{4}} \put(40,25){\circle{4}} \put(80,5){\circle{4}}
\put(80,45){\circle{4}} \put(20,25){\circle*{4}} \put(60,15){\circle*{4}}
\put(60,35){\circle*{4}} \put(2,25){\line(1,0){36}} \put(41,26){\line(2,1){38}}
\put(41,24){\line(2,-1){38}} \put(-1,12){1} \put(37,12){6} \put(18,12){3}
\put(53,5){3} \put(53,37){3} \put(85,2){1} \put(85,42){1}
\end{picture}\] Thus, the number of w-trees in the type $\langle
6,1,1,1\,|\,3,3,3\rangle$ is
$$\frac{72}{3!\,3!}+\frac{12}{\frac{3!\,3!}{3}}=3$$ --- the correct answer.
\end{ex}
Now we can formulate the statement.

\begin{theor} Let $\Xi$ be a non simple type and let $\{k_1,\ldots,k_s\}$ be the
set of pairwise different white weights and $\{l_1,\ldots,l_t\}$ be the set of
pairwise different black weights. Let there be $m_i$ vertices with weight
$k_i$, $i=1,\ldots,s$ and $n_j$ vertices with weight $l_j$, $j=1,\ldots,t$. Put
$p=\prod_{i=1}^s (m_i)!\prod_{i=1}^t(n_i)!$. If the derivative type $\Xi'$
contains $N$ nonsymmetric w-trees and $M_i$, $i=2,3,\ldots$, w-trees with
$i$-order symmetry, then
$$|\Xi|=\frac Np+\sum_i \frac{iM_i}{p}.$$ \end{theor}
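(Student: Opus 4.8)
The plan is to realize $|\Xi|$ as the number of orbits of a ``relabelling'' group acting on the derivative type $\Xi'$, and then to recover the individual orbit sizes from the symmetry orders of the trees. First I would introduce the group
$$G=\prod_{i=1}^s S_{m_i}\times\prod_{j=1}^t S_{n_j},\qquad |G|=\prod_{i=1}^s(m_i)!\prod_{j=1}^t(n_j)!=p,$$
where $S_{m_i}$ permutes the $m_i$ labels carried by the white vertices of weight $k_i$ and $S_{n_j}$ permutes the $n_j$ labels carried by the black vertices of weight $l_j$. This $G$ acts on the set $\Xi'$ of (isotopy classes of labelled) w-trees by permuting labels. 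Erasing the labels defines a map $\pi\colon\Xi'\to\Xi$, and I would first check that its fibres are exactly the $G$-orbits: two labelled w-trees have the same underlying shape precisely when they differ by a relabelling, since for any two admissible labellings $\ell_1,\ell_2$ of a fixed shape the composite $\ell_1^{-1}\circ\ell_2$ is a weight-preserving permutation of the labels, i.e.\ an element of $G$ carrying one to the other. Hence $|\Xi|$ is the number of $G$-orbits on $\Xi'$.

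Next I would compute the size of a single orbit using orbit--stabilizer. Fix an unlabelled w-tree $T\in\Xi$ and let $i$ be its order of symmetry, that is, the order of its group $\mathrm{Aut}(T)$ of weight-preserving plane automorphisms. Two labellings of $T$ give isotopic labelled w-trees iff they differ by an element of $\mathrm{Aut}(T)$. The key point is that a plane automorphism of a tree is a rotation, so $\mathrm{Aut}(T)$ is cyclic of order $i$ and acts \emph{freely} on labellings: a nontrivial rotation moves some vertex and therefore alters the labelling (only the identity fixes every vertex, hence every label). Consequently the stabilizer in $G$ of any labelled representative $x$ over $T$ is canonically isomorphic to $\mathrm{Aut}(T)$ via $g\mapsto \ell\, g^{-1}\ell^{-1}$, so it has order $i$, and the orbit $\pi^{-1}(T)$ has size $|G|/i=p/i$.

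Finally I would organize the count by symmetry order. Note that the order of symmetry is an invariant of the underlying shape $T=\pi(x)$ alone, so every labelled w-tree in the fibre $\pi^{-1}(T)$ again has symmetry order $i$. Let $a_i$ denote the number of unlabelled w-trees in $\Xi$ whose symmetry order equals $i$. By the previous step each such tree contributes a fibre of $p/i$ labelled w-trees, all of symmetry order $i$; summing over the $a_i$ trees recovers the census of $\Xi'$ by symmetry type. Reading off the two cases gives $N=a_1\cdot p$ for the nonsymmetric trees and $M_i=a_i\cdot(p/i)$ for $i\ge2$, whence $a_1=N/p$ and $a_i=iM_i/p$. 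Therefore
$$|\Xi|=\sum_i a_i=\frac Np+\sum_{i\ge2}\frac{iM_i}{p},$$
which is the asserted formula.

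I expect the main obstacle to be the stabilizer computation of the second paragraph. One must justify with care that the group of weight-preserving plane automorphisms of a w-tree is cyclic, that it acts freely on the set of admissible labellings, and that this group coincides (up to the canonical conjugation above) with the stabilizer of a labelled representative under $G$. It is precisely this identification that turns an abstract orbit count into the explicit multiplicities $1$ and $i$ weighting $N$ and $M_i$ in the statement; the remaining bookkeeping — the fibre-equals-orbit claim and the grouping by symmetry order — is then routine.
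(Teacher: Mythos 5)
Your proposal is correct and is essentially the argument the paper intends: the paper states this theorem without any formal proof, and its preceding worked example (the type $\langle 6,1,1,1\,|\,3,3,3\rangle$, where the count $\frac{72}{3!\,3!}+\frac{12}{3!\,3!/3}=3$ is exactly your orbit--stabilizer computation in miniature) shows that the intended reasoning is precisely yours --- each unlabeled w-tree with symmetry of order $i$ lifts to $p/i$ labeled w-trees of the derivative type. Your formalization via the relabelling group $G$ of order $p$, the identification of fibres of the label-forgetting map with $G$-orbits, the freeness of the cyclic rotation group on admissible labellings (legitimate here because labels within a weight--color class are distinct, and a color-preserving plane automorphism is a rotation about a central vertex), and the bookkeeping $N=a_1p$, $M_i=a_i\,p/i$ supplies exactly what the paper leaves implicit.
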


\begin{rem} A type $\Xi$ contains a w-tree with $i$-order symmetry, if
\begin{enumerate}
    \item there exists a vertex $v$ with weight divisible by $i$;
    \item in the set of all other vertices for each color and each weight the
    number of vertices of these weight and color is divisible by $i$.
    and
\end{enumerate}

Such type generates the type $\Xi/i$, where the weight of the vertex $v$ is in
$i$ times smaller and the number of vertices of given color and weight is in
$i$ times smaller, then in the type $\Xi$. Thus, for the type $\Xi=\langle
6,1,1,1\,|\,3,3,3\rangle$ the type $\Xi/3=\langle 2,1\,|\,3\rangle$. Each
w-tree of type $\Xi/i$ generates the unique w-tree of the type $\Xi$. \end{rem}

\begin{ex} Let us consider the type $\Xi=\langle 5,2,1,1,\,|\,3,3,3\rangle$ and
its derivative type $\Xi'=\langle 5,2,1,1',\,|\,3,3',3''\rangle$. The
derivative type admits six 2-partitions, hence, $|\Xi'|=120-6\cdot 12=48$. As
there are no symmetric w-trees, then $|\Xi|=4$. Four w-trees from the type
$\Xi$ are presented below:
\[\begin{picture}(270,50) \put(0,15){\circle{4}} \put(30,15){\circle*{4}}
\put(60,15){\circle{4}} \put(90,15){\circle*{4}} \put(120,15){\circle{4}}
\put(60,45){\circle*{4}} \put(90,45){\circle{4}} \put(2,15){\line(1,0){56}}
\put(62,15){\line(1,0){56}} \put(60,17){\line(0,1){28}} \put(90,15){\line
(0,1){28}} \put(-2,5){\small 2} \put(28,5){\small 3} \put(58,5){\small 5}
\put(88,5){\small 3} \put(119,5){\small 1} \put(64,43){\small 3}
\put(94,43){\small 1}

\put(150,15){\circle{4}} \put(180,15){\circle*{4}} \put(210,15){\circle{4}}
\put(240,15){\circle*{4}} \put(270,15){\circle{4}} \put(180,45){\circle{4}}
\put(210,45){\circle*{4}} \put(152,15){\line(1,0){56}}
\put(212,15){\line(1,0){56}} \put(210,17){\line(0,1){28}} \put(180,15){\line
(0,1){28}} \put(149,5){\small 1} \put(178,5){\small 3} \put(208,5){\small 5}
\put(238,5){\small 3} \put(268,5){\small 2} \put(184,43){\small 1}
\put(214,43){\small 3} \end{picture}\]

\[\begin{picture}(280,50) \put(0,25){\circle{4}} \put(30,25){\circle*{4}}
\put(60,25){\circle{4}} \put(80,35){\circle*{4}} \put(80,15){\circle*{4}}
\put(100,45){\circle{4}} \put(100,5){\circle{4}} \put(2,25){\line(1,0){56}}
\put(61,26){\line(2,1){38}} \put(61,24){\line(2,-1){38}} \put(-2,15){\small 2}
\put(28,15){\small 3} \put(58,15){\small 5} \put(105,2){\small 1}
\put(105,42){\small 1} \put(73,6){\small 3} \put(73,37){\small 3}

\put(130,25){\circle*{4}} \put(160,25){\circle{4}} \put(190,25){\circle*{4}}
\put(220,25){\circle{4}} \put(250,25){\circle*{4}} \put(270,10){\circle{4}}
\put(270,40){\circle{4}} \put(130,25){\line(1,0){28}}
\put(162,25){\line(1,0){56}} \put(222,25){\line(1,0){28}}
\put(250,25){\line(4,3){19}} \put(250,25){\line(4,-3){19}} \put(128,15){\small
3} \put(158,15){\small 5} \put(188,15){\small 3} \put(218,15){\small 2}
\put(248,15){\small 3} \put(275,7){\small 1} \put(275,37){\small 1}
\end{picture}\] \end{ex}

\end{document}